\title{A note on  groups definable in the $p$-adic field}
\date{\today}
\author{Anand Pillay\thanks{Supported by NSF grants DMS-1360702,  DMS 1665035, and DMS 1760413}\\University of Notre Dame \and   Ningyuan Yao\thanks{Supported by NSFC grant 11601090 and Shangai Puijan Program 16PJC018}\\Fudan University }
\newtheorem{Theorem}{Theorem}[section]
\newtheorem{Proposition}[Theorem]{Proposition}
\newtheorem{Remark}[Theorem]{Remark}
\newtheorem{Lemma}[Theorem]{Lemma}
\newtheorem{Fact}[Theorem]{Fact}
\newtheorem{Question}[Theorem]{Question}
\newcommand{\Q}{\mathbb Q}
\begin{document}
\maketitle

\begin{abstract}   It is known \cite{HP} that a group $G$ definable in the field $\Q_{p}$ of $p$-adic numbers is definably locally isomorphic to the group $H(\Q_{p})$ of $p$-adic points of a (connected) algebraic group $H$ over $\Q_{p}$. We observe here that if $H$ is commutative then  $G$ is commutative-by-finite.  This shows in particular that any one-dimensional group $G$ definable in $\Q_{p}$ is commutative-by-finite.  This extends to groups definable in $p$-adically closed fields. We situate the results in a {\em geometric structures} environment. 

\end{abstract}

\section{Introduction and preliminaries}
We here consider analogous questions vis-a-vis the $p$-adic field (or more generally $p$-adically closed fields) to questions long studied for the real field (more generally real closed fields, $o$-minimal structures).  Namely the classification and description of definable groups.  We emphasize {\em definable}  rather than interpretable. These coincide in the real case because of elimination of imaginaries, but not in the $p$-adic case.  Common features  are that definable groups have naturally the structure of real and $p$-adic Lie groups  \cite{Pillay1}, \cite{Pillay2}, as well as being locally isomorphic to real/$p$-adic  algebraic groups \cite{HP}.  But from this point on, things diverge: the $p$-adic dimension on definable sets is less well-behaved, in particular definable groups are in general far from being definably connected, and arguments from the real case do not go through.

We prove here that when $G$ is locally commutative, equivalently the (connected)  algebraic group $H$ over $\Q_{p}$ such that $G$ is definably locally isomorphic to $H(\Q_{p})$, is commutative, then $G$ is commutative-by-finite.  When $G$ is one--dimensional (in the sense of $p$-adic dimension) then $H$ will be a connected algebraic group of algebraic-geometric dimension $1$, so commutative. Thus we deduce from our results that  one-dimensional groups definable in the $p$-adics are commutative-by-finite. 

$Th(\Q_{p})$ (i.e. the home sort) is $dp$-minimal (see \cite{Simon}), hence any one-dimensional group $G$ definable in $Q_{p}$ is also $dp$-minimal, so by a result of Simon \cite{Simon}, $G$ is commutative-by-bounded exponent. So we could deduce commutative-by-finiteness of one-dimensional $G$ definable in $\Q_{p}$  if we knew that there are no bounded exponent groups {\em interpretable}  in the $p$-adics other than finite groups.  This must be true but we do not know a proof at the moment.

The proof of our main result, which is in effect passing from being locally commutative to being commutative-by-finite,  is relatively soft, and we situate it in the context of geometric structures. 

A commutative-by-finite group $G$ is amenable (as a discrete group), hence  definably amenable with respect to any ambient structure in which $G$ happens to be definable. As also $Th(\Q_{p})$ is $NIP$ (see Section 4.2, \cite{Belair}), this puts us in a position to be able to apply some nice results from \cite{MOS} which refine the ``algebraic group configuration theorem" of \cite{HP}, and we (and others)  intend to carry this out in a future work. Even though the current paper is short we thought it makes sense as a ``stand-alone" paper as it is self-contained, the  methods are elementary,  and it may be useful for future work by the authors and others.

We use fairly basic model theory. We refer to the excellent survey \cite{Belair} as well as \cite{HP} and \cite{Onshuus-Pillay}  for the model theory of the $p$-adic field 
$(\Q_{p},+,\times, 0,1)$. In fact both \cite{HP} and \cite{Onshuus-Pillay} are also good references for the model theoretic background required for the current paper. A {\em geometric structure} (see Section 2 of \cite{HP})  is a one-sorted structure $M$ such that in any model $N$ of $Th(M)$, algebraic  closure satisfies exchange (so gives a so-called pregeometry on $N$) and there is a finite bound on the sizes of finite sets in uniformly definable families.  The structure $(\Q_{p},+,\times,0,1)$ is an example of a geometric structure (\cite{HP}, Proposition 2.11), as model-theoretic algebraic closure coincides with field-theoretic algebraic closure. 

In a geometric structure $M$, if $a$ is a finite tuple from $M$ and $B$ a subset of $M$ then $dim(a/B)$ denotes the size of a maximal algebraically independent over $B$ subtuple of $a$. If $M$ is saturated and $X$ is a $B$-definable subset of $M^{n}$ (where $B$ is finite) then $dim(X)= max\{dim(a/B):a\in X\}$.  It is important to know that when $M$ is $(\Q_{p},+,\times,0,1)$, and $X\subseteq M^{n}$ is definable, then its dimensioin in the above sense coincides with its ``topological dimension" , namely the greatest $k\leq n$ such that the image of $X$ under some projection from $M^{n}$ to $M^{k}$ contains an open set.

In one  of the general results below we make an assumption on the existence of $G^{0}$.  So we explain what this means, although it is discussed in the first section of \cite{Onshuus-Pillay}.  Work in a saturated structure ${\bar M}$ and suppose $G$ is definable in ${\bar M}$. Let $A$ be a small (of cardinality strictly less than the degree of saturation of ${\bar M}$) subset of ${\bar M}$ such that $G$ is defined over $A$. Then by $G^{0}_{A}$ we mean the intersection of all $A$-definable subgroups of $G$ of finite index. We say that {\em $G^{0}$ exists} if $G^{0}_{A}$ does not depend on $A$, namely cannot get smaller by increasing $A$.  Note that the existence of $G^{0}$ is equivalent to the nonexistence of an infinite uniformly definable family of subgroups of $G$ of some fixed finite index, which amounts to the $DCC$ on intersections of uniformly definable subgroups of (a given) finite index.  As mentioned in \cite{Onshuus-Pillay}, the existence of $G^{0}$ follows from the ambient theory having $NIP$. 

Finally let us state clearly the local isomorphism results alluded to earlier.

\begin{Fact} (\cite{Pillay2})   Let $G$ be a group definable in the field $\Q_{p}$. Then $G$ has definably the structure of a $p$-adic Lie group. Moreover if $G$ has dimension $k$  as a definable group then it has dimension $k$ as a $p$-adic Lie group.

\end{Fact}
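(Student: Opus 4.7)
The plan is to adapt to the $p$-adic setting the standard strategy for equipping definable groups in o-minimal structures with an analytic manifold structure, using $p$-adic cell decomposition (Denef, Denef--van den Dries) together with the fact that definable functions in $\Q_{p}$ are piecewise $p$-adically analytic on suitably refined cells.

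First I would apply cell decomposition to $G \subseteq \Q_{p}^{n}$, producing a finite partition of $G$ into $p$-adic cells. The top-dimensional cells, after projection to appropriate coordinates, are definably homeomorphic to open subsets of $\Q_{p}^{k}$, where $k$ is the definable dimension of $G$. This yields a nonempty definable open $V \subseteq G$ together with a definable homeomorphism $\phi : V \to W \subseteq \Q_{p}^{k}$. After further refining, the multiplication map $m : G \times G \to G$, expressed in coordinates via $\phi$ on a small open neighborhood of some generic pair $(a,b)$ with $a, b, ab \in V$, becomes $p$-adically analytic; similarly for inversion.

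Next, I would exploit left-translations $L_{g} : G \to G$: these are definable bijections, so for each $g \in G$ the composition $\phi \circ L_{g^{-1}}$, restricted to a suitable open neighborhood of $g$, gives an analytic chart around $g$. The transition maps between overlapping such charts are compositions of $\phi$, $\phi^{-1}$, and the group operations, hence analytic by the previous step. This equips $G$ with a $p$-adic analytic manifold structure in which multiplication is analytic, and inversion is then analytic by the $p$-adic implicit function theorem applied to $m$. The Lie group dimension equals the target dimension $k$ of $\phi$, which matches the definable dimension by construction.

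The main obstacle is obtaining analyticity, not merely continuity, of the group operations in coordinates around a generic point. This is where the analytic cell decomposition theory for $\Q_{p}$ must be invoked: one needs that definable functions are generically analytic, and moreover uniformly enough that this can be applied to the multiplication map on an open subset of $G \times G$. Once this local input is secured, the translation trick propagates analyticity to the whole group, and definability of the resulting atlas follows from the definability of the underlying cell decomposition.
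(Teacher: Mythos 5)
The paper does not prove this statement: it is quoted as Fact 1.1 with a citation to \cite{Pillay2}, so the only comparison available is with that reference (and with the o-minimal prototype in \cite{Pillay1} which it adapts). Your sketch follows essentially the same strategy as the cited proof: use cell decomposition and generic analyticity of definable functions in $\Q_{p}$ to produce a definable chart on a large (co-small-dimensional) subset of $G$ on which multiplication is analytic at a generic point, then spread the structure around by left translation. Two places deserve more care than your one-line dismissals. First, generic analyticity only gives you that $m$ is analytic in coordinates near a \emph{generic} pair $(a,b)$; to conclude that the transition maps of the translated atlas are analytic \emph{everywhere} they are defined, you need the standard double-translation trick (factor an arbitrary product $gh$ as $(gt)(t^{-1}h)$ with $t$ generic over $g,h$, so every composition you differentiate passes through generic points). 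You gesture at ``the translation trick propagates analyticity'' but this is exactly the step where the content lies. Second, you should verify that the charts are coherent, i.e.\ that finitely many translates of the generic open set $V$ cover $G$ (this uses that a definable subset of $G$ whose complement has smaller dimension generates $G$ in finitely many steps, plus saturation/compactness) and that the resulting topology is a group topology independent of the choice of $V$; without this the ``atlas'' is only a germ of a chart at the identity. The dimension statement is then immediate, since the chart takes values in $\Q_{p}^{k}$ with $k$ the algebraic-closure dimension, which agrees with topological dimension for definable sets in $\Q_{p}$, as the paper recalls in its preliminaries.
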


\begin{Fact} (\cite{HP}) Suppose $G$ is a group definable in the field $\Q_{p}$. Consider $G$ with its topology given by Fact 1.1. Then there is a connected algebraic group $H$ over $\Q_{p}$, where the algebraic-geometric dimension of $H$ equals the dimension of $G$, and a definable homeomorphism $f$ between an open neigbourhood $U$ of the identity in $G$ and an open neighbourhood $V$  of the identity of (the $p$-adic Lie group) $H(\Q_{p})$ such that $f(ab) = f(a)f(b)$ whenever $a,b\in  U$ and $ab\in U$. 

\end{Fact}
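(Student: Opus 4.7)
The plan is to prove Fact 1.2 by a Weil--Hrushovski style group chunk argument, taking the $p$-adic analytic local group structure from Fact 1.1 as input and producing an algebraic group as output. Fix a small parameter set $A$ over which $G$ is defined, and let $a, b \in G$ be generic and independent over $A$ in the sense of $p$-adic dimension. Since the multiplication of $G$ is both definable and analytic (Fact 1.1), and $G$ has dimension $k$ in both senses, the germ of the multiplication $m(x,y) = xy$ at $(a,b)$ is a well-defined piece of semi-algebraic data.

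The next step is to show that this germ is in fact a rational map. Using Macintyre's cell decomposition together with the Denef--van den Dries description of definable functions in $\Q_p$, the multiplication restricted to a suitable open cell around $(a,b)$ is given by rational functions. The essential ingredient is that model-theoretic algebraic closure in $\Q_p$ coincides with field-theoretic algebraic closure (Proposition 2.11 of \cite{HP}), so the canonical parameter of the germ at $(a,b)$ lies in the field-algebraic closure of $\Q_p(A, a, b)$. This promotes the definable germ of multiplication to a birational group law in Weil's sense: an irreducible $k$-dimensional variety $V$ over $\Q_p$ carrying a rational partial composition that is associative on a dense open set.

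Weil's theorem then yields a unique connected algebraic group $H$ over $\Q_p$ of algebraic-geometric dimension $k$ into which this birational group law embeds. Finally, comparing the definable charts on $G$ near the identity with the algebraic charts on $H(\Q_p)$ near its identity, and using that rational maps are analytic on their domain of regularity, one obtains a definable homeomorphism $f$ from a small enough open neighborhood $U$ of $e \in G$ onto an open neighborhood $V$ of $e \in H(\Q_p)$, satisfying $f(ab) = f(a)f(b)$ whenever $a, b, ab \in U$.

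The main obstacle is the descent to $\Q_p$: one must check that the canonical parameters of the multiplication germs really are field-algebraic over $\Q_p$-parameters, so that the birational group law is defined over $\Q_p$ itself rather than merely over its algebraic closure, and then perform a Galois-theoretic descent to confirm that Weil's theorem outputs an algebraic group defined over $\Q_p$. Once the birational group law is in place over $\Q_p$, the remaining steps --- extending the chunk via Weil's theorem and reading off the local isomorphism from matching charts --- are essentially formal.
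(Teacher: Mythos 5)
This statement is quoted in the paper as a Fact, with no proof given; it is cited from \cite{HP}, so the only comparison available is with the argument there. Your outline does follow the same broad strategy as \cite{HP} (exploit that model-theoretic and field-theoretic algebraic closure coincide, extract a generic group law, and invoke a Weil-style group chunk theorem), but there is a genuine gap at the central step. You assert that, by cell decomposition and the description of definable functions, the multiplication of $G$ is given by \emph{rational} functions on a cell around a generic pair $(a,b)$. This is false in general: definable functions in $(\Q_p,+,\times)$ are only piecewise \emph{algebraic} (they may involve definable choices of $n$-th roots, e.g.\ the square root of $x$ congruent to $1$ mod $p$ on a neighbourhood of $1$), and indeed if the group law were already generically rational the theorem would be close to trivial. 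What the coincidence of closures actually gives is that $ab$ is \emph{field-algebraic} over $(a,b)$, i.e.\ a generically finite algebraic correspondence, not a rational map. The substantive content of \cite{HP} is precisely to handle this: one passes to the algebraically closed field (a stable structure), applies the group configuration / group chunk machinery there to produce a connected algebraic group $H$ together with a generic \emph{finite-to-finite} group correspondence between $G$ and $H(\Q_p)$, and deals with descent of $H$ to $\Q_p$.

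Relatedly, your final step --- ``reading off the local isomorphism from matching charts'' --- is not formal. Because what comes out of the chunk argument is a finite-to-finite correspondence rather than a bijection, one must still refine it to an honest definable homeomorphism $f:U\to V$ near the identities; this uses the $p$-adic Lie group structure from Fact 1.1 (e.g.\ translating the correspondence to the identity and shrinking to profinite open subgroups where it becomes one-to-one). So your sketch correctly identifies the ingredients (geometric-field property, Weil's theorem, descent to $\Q_p$) but the two places you treat as routine --- rationality of the generic law and passage from a correspondence to a local isomorphism --- are exactly where the real work of \cite{HP} lies.
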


\begin{Remark} (i) In Fact 1.2 we can actually choose $f$ to be a definable isomorphism (and homeomorphism) between definable open subgroups of $G$ and $H(\Q_{p})$, because any (definable) $p$-adic Lie group has a (definable) compact open subgroup, and a compact $p$-adic Lie group is profinite. 
\newline
(ii)  In fact both Fact 1.1 and Fact 1.2 hold (with appropriate definitions of definable Lie group over a $p$-adically closed field $K$) for groups $G$ definable in an elementary extension $K$ of $\Q_{p}$.  See Step 1 of the proof of Theorem 2.1 in \cite{HP2} in the real closed field situation which works word for word in the $p$-adically closed field case. 

\end{Remark}

\vspace{5mm}
\noindent
{\em Acknowledgements.}  Both authors would  like to thank the Institut Henri Poincar\'{e}, Paris, for its hospitality and support during the trimester on model theory in early 2018 when this work was done. The second author would like to thank the IHES, Orsay,  for its hospitality during the academic year 2017-18.  Both authors would like to thank Immi Halupczok for discussions, in particular for telling us some valued-field arguments around  finite index of centralizers  which we use in the paper  (although we situate it in a more general environment).

\section{Results}

We start with an easy lemma about geometric structures.

\begin{Lemma} Suppose $M$ is a geometric structure, $X\subseteq M^{n}$ a definable set of dimension $k$ say, and $f$ is a definable function from $X$ to $M^{m}$. Suppose that $f^{-1}(b)$ has dimension $k$ for all $b\in Im(f)$ then $Im(f)$ is finite.
\end{Lemma}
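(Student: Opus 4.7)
The plan is to argue by contradiction using the additivity of dimension in geometric structures. Assume, toward a contradiction, that $\mathrm{Im}(f)$ is infinite. Then $\dim(\mathrm{Im}(f)) \geq 1$, since in a geometric structure any definable set of dimension $0$ is finite (its points lie in $\mathrm{acl}(B)$ for the finite parameter set $B$, and a definable set contained in $\mathrm{acl}(B)$ in a saturated model is finite by the uniform finiteness clause).

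Work in a sufficiently saturated model, and let $B$ be a finite parameter set over which $X$ and $f$ are defined. Choose a generic point $b$ of $\mathrm{Im}(f)$ over $B$, i.e.\ $b\in\mathrm{Im}(f)$ with $\dim(b/B)=\dim(\mathrm{Im}(f))\geq 1$. Since the fiber $f^{-1}(b)$ is $Bb$-definable and has dimension $k$ by hypothesis, I can choose $a\in f^{-1}(b)$ with $\dim(a/Bb)=k$.

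Now apply additivity of dimension. Since $b=f(a)\in\mathrm{dcl}(Ba)$, we have $\dim(a,b/B)=\dim(a/B)$, and additivity gives
\[
\dim(a/B) \;=\; \dim(a/Bb)+\dim(b/B) \;\geq\; k+1.
\]
On the other hand $a\in X$ and $\dim(X)=k$, so $\dim(a/B)\leq k$, a contradiction. Hence $\mathrm{Im}(f)$ must be finite.

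The only nontrivial ingredient is the additivity $\dim(a/B)=\dim(a/Bb)+\dim(b/B)$, which is one of the standard facts about pregeometries induced by $\mathrm{acl}$ in a geometric structure (see Section 2 of \cite{HP}); the rest is just choosing the parameter point $b$ generically in $\mathrm{Im}(f)$ and the point $a$ generically in the fiber. The potential obstacle is really just bookkeeping around saturation: one needs enough saturation so that both a generic $b\in\mathrm{Im}(f)$ over $B$ and a generic $a\in f^{-1}(b)$ over $Bb$ exist, but this is automatic once one passes to a monster model, which does not affect the conclusion since $\mathrm{Im}(f)$ being finite is preserved under elementary extension.
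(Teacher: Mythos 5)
Your proof is correct and is essentially the same as the paper's: both argue by contradiction, pick $b\in \mathrm{Im}(f)$ with $\dim(b/B)\geq 1$, pick $a$ generic in the fiber $f^{-1}(b)$, and use additivity of dimension together with $b\in\mathrm{dcl}(Ba)$ to force $\dim(a/B)>k$, contradicting $\dim(X)=k$. The only difference is cosmetic (you spell out why an infinite definable set has dimension at least $1$ and keep the base $B$ explicit).
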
 
\begin{proof} We may assume $M$ to be saturated and work over the parameters over which $X$ and $f$ are defined.  Suppose for a contradiction $Im(f)$ to be infinite. 
Then we can find $b\in Im(f)$ such that $dim(b)\geq 1$. As $dim(f^{-1}(b)) = k$ we can find $a\in f^{-1}(b)$ such that $dim(a/b) = k$. Hence by subadditivity, $dim(a,b) > k$. As $b\in dcl(a)$ it follows that $dim(a) >k$, contradicting that $a\in X$ and $dim(X) = k$. 
\end{proof} 

\begin{Remark} The conclusion of the lemma is weaker than stating that there is no definable equivalence relation on a definable $k$-dimensional set with infinitely many classes of dimension $k$. The latter is false in $\Q_{p}$

\end{Remark}

\begin{Proposition} Let $M$ be a geometric structure, and let  $G\subseteq M^{n}$ be a group definable in $M$ with $dim(G) = k$. Assume that (working in a saturated model) $G^{0}$ exists. Suppose that $G$ contains a definable subset $X$ of dimension $k$ such that $ab = ba$ for all $a,b\in X$. Then $G$ is commutative-by-finite, namely $G$ has a (definable) subgroup $H$ of finite index such that $H$ is commutative.
\end{Proposition}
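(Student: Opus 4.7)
The plan is to apply Lemma 2.1 to conjugation maps on $G$ and then on an intermediate subgroup, extracting a commutative finite-index subgroup via two parallel passes.

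For the first pass I would show that each individual centralizer $C_G(a)$, $a \in X$, has finite index in $G$. Commutativity of $X$ gives $X \subseteq C_G(a)$, hence $\dim C_G(a) = k$. Applying Lemma 2.1 to the definable map $f_a : G \to G$, $g \mapsto g a g^{-1}$, whose fibers are cosets of $C_G(a)$ of dimension $k = \dim G$, I would conclude that the image (the conjugacy class $a^G$) is finite, i.e.\ $[G:C_G(a)] < \infty$. The definable sets $X_n := \{a \in G : [G:C_G(a)] \leq n\}$ then cover $X$, so by compactness in the saturated model there is a uniform $N$ with $[G:C_G(a)] \leq N$ for all $a \in X$. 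At this point $\{C_G(a) : a \in X\}$ is a uniformly definable family of subgroups of $G$ of index $\leq N$, and the DCC form of the hypothesis that $G^0$ exists tells us that it contains only finitely many distinct members. Hence $H := C_G(X) = \bigcap_{a \in X} C_G(a)$ is a finite intersection of finite-index subgroups, so $[G:H] < \infty$.

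For the second pass I would observe that $X \subseteq H$ (since $X$ is commutative) and that every element of $X$ commutes with every element of $H$ (since $H = C_G(X)$), so in fact $X \subseteq Z(H)$ and $\dim Z(H) = k$. The $G^0$ hypothesis transfers to $H$: since $H$ has finite index in $G$, $H^0 = G^0$. The argument of the first pass then runs verbatim inside $H$, with $h \in H$ playing the role of $a \in X$. Namely, $X \subseteq C_H(h)$ gives $\dim C_H(h) = k = \dim H$; Lemma 2.1 inside $H$ yields $[H:C_H(h)] < \infty$; compactness yields a uniform bound on these indices; and the DCC collapses $Z(H) = \bigcap_{h \in H} C_H(h)$ to a finite intersection of finite-index subgroups of $H$. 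Therefore $[H:Z(H)] < \infty$, and combined with $[G:H] < \infty$ this exhibits $Z(H)$ as a definable commutative subgroup of $G$ of finite index, as required.

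The main technical obstacle will be the passage from pointwise to uniform finite index for the families $\{C_G(a): a \in X\}$ and $\{C_H(h) : h \in H\}$. The uniform bound on the indices will come from compactness in the saturated model, while collapsing an a priori infinite intersection of finite-index subgroups to a finite one will come from the DCC consequence of the existence of $G^0$. Together, these two ingredients close precisely the gap noted in Remark 2.2; apart from them the argument is a soft double application of Lemma 2.1.
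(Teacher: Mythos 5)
Your proposal is correct and follows essentially the same two-pass argument as the paper: Lemma 2.1 applied to fibers that are cosets of a dimension-$k$ centralizer, then compactness for a uniform index bound, then the DCC consequence of $G^{0}$ existing to collapse $C_{G}(X)$ to a finite intersection, and a second identical pass over the resulting finite-index subgroup $H$. The only cosmetic differences are that you use the conjugation map $g\mapsto gag^{-1}$ where the paper uses the commutator map $g\mapsto gag^{-1}a^{-1}$ (same fibers, same image cardinality), and your final subgroup $Z(H)$ is literally the paper's $H\cap C_{G}(H)$.
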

\begin{proof} For $a\in X$, let $f_{a}$ be the (definable) function from $G$ to $G$ defined by $f_{a}(g) = gag^{-1}a^{-1}$. 
\newline
{\em Claim I.}  Fix $a\in X$.  For any $g_{1}, g_{2}\in G$, $f_{a}(g_{1}) = f_{a}(g_{2})$ iff $g_{1}C_{G}(a) = g_{2}C_{G}(a)$.
\newline
{\em Proof of Claim I.} 
\newline
This is obvious but we do it anyway. 
$g_{1}a g_{1}^{-1}a^{-1} = g_{2}ag_{2}^{-1}a^{-1}$ iff $g_{2}^{-1}g_{1}ag_{1}^{-1}g_{2} = a$, namely $g_{1}g_{2}^{-1}\in C_{G}(a)$.

\vspace{2mm}
\noindent
{\em Claim II.}  For $a\in X$, $Im(f_{a})$ is finite.
\newline
{\em Proof of Claim II.}  
\newline
Note that $C_{G}(a)$, the centralizer of $a$ in $G$ contains $X$ (by assumption) so has dimension $k$. Hence for any $g\in G$, $dim(gC_{G}(a)) = k$.
By the right implies left implication in Claim I, $f_{a}$ is constant on $gC_{G}(a)$ for all $g\in G$. So we conclude by Lemma 2.1.

\vspace{2mm}
\noindent
{\em Claim III.} For any $a\in X$, $C_{G}(a)$ has finite index in $G$.
\newline
{\em Proof of Claim III.}  
\newline
By Claim I, $Im(f)$ is in bijection with $G/C_{G}(a)$, so by Claim II, $C_{G}(a)$ has finite index in $G$. 

\vspace{5mm}
\noindent
We may assume that we have been working in a saturated model $M$. So by compactness there is a finite bound on the index of $C_{G}(a)$ in $G$ for $a\in X$. Our assumption that $G^{0}$ exists (as discussed in the previous section)  implies that $C_{G}(X) = \cap_{a\in X}C_{G}(a)$ is a finite subintersection, so is a definable subgroup $H$ say of $G$, of finite index.

Now for any $a\in H$, $C_{G}(a)$ contains $X$, so has dimension $k$. So repeating Claims I, II, and III, for $a$ in $H$ rather than $X$ we conclude that $C_{G}(a)$ has finite index in $G$ for all $a\in H$, and so again that $C_{G}(H)$ has finite index in $G$.  So $H\cap C_{G}(H)$ has finite index in $G$ and is  commutative.
\end{proof}

\begin{Remark} One really  needs only a finite-valued subadditive dimension on types of real tuples, for Lemma 2.1 and Proposition 2.3. And for Proposition 2.3, one only needs a definable set $X\subseteq G$ of dimension $k$ such that for all $a\in X$, $C_{G}(a)$ has dimension $k$. 
\end{Remark}

In any case, from  Proposition 2.3 we conclude:
\begin{Theorem}  (i) Let $G$ be a group definable in $\Q_{p}$. Let $H$ be a connected algebraic group over $\Q_{p}$ as in Fact 1.1. Suppose that $H$ is commutative, then $G$ is commutative-by-finite.
\newline
(ii)  Let $G$ be a group of dimension $1$ definable in $\Q_{p}$. Then $G$ is commutative-by-finite. 
\end{Theorem}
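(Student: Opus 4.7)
The plan is to obtain both parts as essentially immediate corollaries of Proposition 2.3 combined with the local isomorphism result (Fact 1.2 and Remark 1.3(i)).

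For part (i), I would start by invoking Remark 1.3(i) to upgrade the local isomorphism of Fact 1.2 to a definable group isomorphism (and homeomorphism) $f\colon U \to V$, where $U$ is a definable open subgroup of $G$ and $V$ is a definable open subgroup of $H(\Q_p)$. Since $H$ is a commutative algebraic group, $H(\Q_p)$ is a commutative group, so the subgroup $V$ is commutative; transporting back through $f$, the subgroup $U$ of $G$ is commutative. Next I would check that $U$ has $p$-adic dimension $k = \dim(G)$: this is because $U$ is open in $G$, and the model-theoretic dimension in $\Q_p$ agrees with topological dimension, so any nonempty open subset of $G$ has the same dimension as $G$. Taking $X := U$, I have a definable subset of $G$ of full dimension $k$ all of whose elements pairwise commute. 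Finally, $G^0$ exists in a saturated elementary extension because $Th(\Q_p)$ is $NIP$ (as noted in the introduction). Therefore Proposition 2.3 applies directly and yields that $G$ is commutative-by-finite.

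For part (ii), if $\dim(G) = 1$ then by Fact 1.2 the connected algebraic group $H$ over $\Q_p$ has algebraic-geometric dimension $1$. One then appeals to the classical fact that every connected algebraic group of dimension $1$ is commutative (up to isogeny such an $H$ is $\mathbb{G}_a$, $\mathbb{G}_m$, a one-dimensional twist thereof, or an elliptic curve, all commutative). Hence part (i) applies and gives the conclusion.

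There is essentially no obstacle once Proposition 2.3 is in hand. The only point requiring a moment's care is the verification that the open subgroup $U$ produced by Remark 1.3(i) genuinely has dimension $k$ as a definable set, which is handled by the identification of model-theoretic and topological dimension in $\Q_p$ recorded in the introduction. For part (ii), the only external input is the algebro-geometric classification of connected one-dimensional algebraic groups, which is standard and used only to ensure commutativity of $H$ so that (i) can be invoked.
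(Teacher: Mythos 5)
Your proposal is correct and follows essentially the same route as the paper: use the local isomorphism with $H(\Q_{p})$ to produce a definable commutative subset of $G$ of full dimension (the paper works with a neighbourhood $U_{1}$ on which the local homomorphism property forces commutativity, while you invoke Remark 1.3(i) to get an honest open subgroup — an inessential variation), check that topological and geometric dimensions agree, note that $G^{0}$ exists by $NIP$, and apply Proposition 2.3; part (ii) reduces to (i) via the commutativity of connected one-dimensional algebraic groups exactly as in the paper.
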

\begin{proof} (i)  Let $U$, $V$ be definable open neighbourhoods of the identity of $G$, $H(\Q_{p})$ respectively and $f:U\to V$ as given by Fact 1.2. By choosing a smaller definable open neigbourhood $U_{1}$ of the identity contained in $G$ such that $ab\in U$ for $a,b\in U_{1}$, we see from the assumptions that $ab = ba$ for $a,b\in U_{1}$. Now the $p$-adic topological) dimension of $U_{1}$ coincides with that of $G$, but both coincide with the dimension with respect to $\Q_{p}$ as a geometric structure. So, bearing in mind that $G^{0}$ exists (working in a saturated model), as remarked in the introduction. We can apply Proposition 2.3 to conclude that $G$ is commutative-by-profinite.
\newline
(ii)  If $G$ has dimension $1$ then by Fact 1.2 the connected algebraic group $H$ has dimension $1$ as an algebraic group, so is commutative. So part (i) implies.

\end{proof} 

\begin{Remark}  By Remark 1.3 (ii),  Theorem 2.4 goes through for groups definable in arbitrary $p$-adically closed fields (i.e. models of $Th(\Q_{p})$). 
\end{Remark}

\begin{Question} (i)  Do we need the assumption that $G^{0}$ exists in Proposition 2.3?  We presume yes, namely there is a counterexample without it.
\newline
(ii)  Let $F$ be a geometric field in the sense of  Definition 2.9 of \cite{HP}. Let $G$ be a group definable in $F$ and let $H$ be a connected algebraic group over $F$ given by Proposition 3.1' of \cite{HP}. Suppose $H$ is commutative. Can one find a definable subset $X$ of $G$ of dimension equal to $dim(G)$ such that for all $a\in X$, $C_{G}(a)$ has dimension equal to $dim(G)$?
\end{Question}

\end{document}